\newtheorem{theorem}{Theorem}[section]
\newtheorem{proposition}[theorem]{Proposition}
\newtheorem{lemma}[theorem]{Lemma}
\newtheorem{rem}[theorem]{Remark}
\theoremstyle{definition}
\newtheorem{definition}[theorem]{Definition}
\newcommand\projective\mathbf%\mathbb
\newcommand\PP{\projective P}
\newcommand\OO{\mathcal O}
\newcommand\CC{\mathbb C}
\newcommand\onto\twoheadrightarrow
\newcommand\lra\longrightarrow
\newcommand\dar\downarrow
\begin{document}

\title{Cohomological splitting conditions of vector bundles on $\PP^{n_1}\times\cdots\times\PP^{n_s}$}
\author{Damian M Maingi}
\date{September, 2024}
\keywords{Multiprojective spaces, vector bundles, regularity of bundles}
%2010 {\itshape AMS Mathematics Subject Classification: 14FO5, 14J60} 

\address{Department of Mathematics\\Sultan Qaboos University\\ P.O Box 50, 123 Muscat, Oman\\
Department of Mathematics\\University of Nairobi\\P.O Box 30197, 00100 Nairobi, Kenya\\https://orcid.org/0000-0001-9267-9388}
\email{dmaingi@squ.edu.om, dmaingi@uonbi.ac.ke} %dmaingi@gmail.com}

\begin{abstract}
In this paper we extend the results of Ballico and Malaspina on regularity and splitting  conditions on multiprojective spaces 
$X=\PP^{n_1}\times\ldots\times\PP^{n_s}$.
\end{abstract}

\maketitle

% The article itself
\section{Introduction}
\noindent The concept of regularity of $\mathscr{E}$ a coherent sheaf on $\mathbb{P}^n$ which was introduced by Mumford \cite{12} turned out
to be a key tool in algebraic geometry and commutative algebra in that it measures the complexity of a sheaf or a module.
The regularity of a coherent sheaf $\mathscr{E}$ is the smallest twist $r$ for which the sheaf is globally generated.
It plays a very important role in a number of classic areas for instance muduli problems, vanishing theorems, syzygies, linear systems etc. 
After almost 60 years many authors have contributed greatly see \cite{1,2,3,4,5,6,7} and \cite{10,11,12,13} and it still remains a fertile ground for research.

%There has been several extensions over the years for different cases \cite{}.

\noindent The notion of regularity on a multiprojective space $\PP^{n_1}\times\cdots\times\PP^{n_s}$ was delt with by Ballico and Malaspina \cite{4}
in this paper we extend the splitting criteria for vector bundles on a biprojective space $\PP^{n}\times\PP^{m}$  to a multiprojective space $\PP^{n_1}\times\cdots\times\PP^{n_s}$.

\begin{definition} A coherent sheaf F on $\PP^{n_1}\times\cdots\times\PP^{n_s}$ is said to be $(p_1,\ldots,p_s)$-regular
if, for all $i>0$,
\[H^i(F(p_1,\ldots,p_s)\otimes\OO(k_1,\ldots, k_s)) = 0\]
whenever $k_1 + \cdots+k_s = -i$ and $-n_j \leq k_j \leq 0$ for any $j = 1,\ldots,s$.
\end{definition}

\begin{theorem}[K\"{u}nneth formula]
 Let $X$ and $Y$ be projective varieties over a field $k$. 
 Let $\mathscr{F}$ and $\mathscr{G}$ be coherent sheaves on $X$ and $Y$ respectively.
 Let $\mathscr{F}\boxtimes\mathscr{G}$ denote $p_1^*(\mathscr{F})\otimes p_2^*(\mathscr{G})$\\
 then $\displaystyle{H^m(X\times Y,\mathscr{F}\boxtimes\mathscr{G}) \cong \bigoplus_{p+q=m} H^p(X,\mathscr{F})\otimes H^q(Y,\mathscr{G})}$.
\end{theorem}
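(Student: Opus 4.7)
The plan is to reduce the statement to the algebraic Künneth formula for tensor products of cochain complexes of $k$-vector spaces via \v{C}ech cohomology, exploiting that we work over a field.

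First I would pick finite affine open covers $\mathcal{U}=\{U_\alpha\}_{\alpha\in A}$ of $X$ and $\mathcal{V}=\{V_\beta\}_{\beta\in B}$ of $Y$; these exist since both are projective, hence separated and of finite type over $k$. Then $\mathcal{W}=\{U_\alpha\times V_\beta\}_{(\alpha,\beta)\in A\times B}$ is a finite affine open cover of $X\times Y$. Since $\mathscr{F}$, $\mathscr{G}$, and $\mathscr{F}\boxtimes\mathscr{G}$ are quasi-coherent and the underlying varieties are separated, Serre's vanishing theorem on affines ensures that the corresponding \v{C}ech complexes compute sheaf cohomology on $X$, $Y$ and $X\times Y$ respectively.

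Next I would use the identification
\[\Gamma(U_\alpha\times V_\beta,\mathscr{F}\boxtimes\mathscr{G})\;\cong\;\Gamma(U_\alpha,\mathscr{F})\otimes_k \Gamma(V_\beta,\mathscr{G}),\]
which for affines $U_\alpha=\operatorname{Spec} R$ and $V_\beta=\operatorname{Spec} S$ follows from the equality $M\otimes_k N\cong\Gamma(\operatorname{Spec}(R\otimes_k S),\widetilde{M\otimes_k N})$ applied to $M=\mathscr{F}(U_\alpha)$ and $N=\mathscr{G}(V_\beta)$. This identification upgrades $\check C^{\bullet}(\mathcal{W},\mathscr{F}\boxtimes\mathscr{G})$ to the total complex of the double complex $\check C^{\bullet}(\mathcal{U},\mathscr{F})\otimes_k \check C^{\bullet}(\mathcal{V},\mathscr{G})$ with the standard Koszul sign convention. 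Because $k$ is a field, every $k$-module is flat, so the algebraic Künneth formula for complexes of $k$-vector spaces yields
\[H^m\bigl(\check C^{\bullet}(\mathcal{U},\mathscr{F})\otimes_k\check C^{\bullet}(\mathcal{V},\mathscr{G})\bigr)\;\cong\;\bigoplus_{p+q=m}H^p(X,\mathscr{F})\otimes_k H^q(Y,\mathscr{G}),\]
which is precisely the asserted isomorphism.

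The principal technical obstacle is verifying the compatibility of differentials in the isomorphism of complexes: under the natural bijection of multi-indices one must check that the \v{C}ech differential on $\mathcal{W}$ decomposes as $d_{\mathcal{U}}\otimes 1\pm 1\otimes d_{\mathcal{V}}$ with the correct signs, so that one genuinely obtains the total complex of a tensor product. A secondary subtlety is to confirm that $\mathscr{F}\boxtimes\mathscr{G}$ is quasi-coherent on $X\times Y$ and that higher cohomology vanishes on the affine intersections $U_{\alpha_0\cdots\alpha_p}\times V_{\beta_0\cdots\beta_q}$; both facts follow from standard properties of pullbacks, tensor products of quasi-coherent sheaves, and Serre's theorem on affine schemes, but they deserve explicit mention to keep the argument self-contained.
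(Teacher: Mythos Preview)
The paper does not prove this statement at all: the K\"unneth formula is quoted as a background result and used as a black box, with no argument given. So there is nothing to compare your proposal against on the paper's side.

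Your outline is the standard and correct proof. The one point worth sharpening is the identification of $\check C^\bullet(\mathcal W,\mathscr F\boxtimes\mathscr G)$ with the total complex of $\check C^\bullet(\mathcal U,\mathscr F)\otimes_k\check C^\bullet(\mathcal V,\mathscr G)$: the index sets are genuinely different, since an $n$-simplex of $\mathcal W$ is an $(n{+}1)$-tuple in $A\times B$, not a pair consisting of a $(p{+}1)$-tuple in $A$ and a $(q{+}1)$-tuple in $B$ with $p+q=n$. The clean fix is to work instead with the double \v Cech complex $C^{p,q}=\check C^p(\mathcal U,\,p_{1*}(\mathscr F\boxtimes\mathscr G)|_{V_{\beta_0\cdots\beta_q}})$ (or equivalently to use a Leray/spectral-sequence argument for the projection $X\times Y\to X$), whose total complex is visibly the tensor product and which also computes $H^*(X\times Y,\mathscr F\boxtimes\mathscr G)$. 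You already flag this as the principal technical obstacle, so you are aware of it; just be explicit that it is an identification up to quasi-isomorphism rather than a literal isomorphism of complexes.
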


\begin{lemma}
Let $X=\PP^{n_s}\times\cdots\times\PP^{n_s}$ then\\
$\displaystyle{H^t(X,\OO_X (a_1,\cdots,a_s))\cong \bigoplus_{t=\sum_{i=1}^{s}{t_i}} H^{t_1}(\PP^{n_1},\OO_{\PP^{n_1}}(a_1))\otimes \cdots \otimes H^{t_s}(\PP^{n_s},\OO_{\PP^{n_s}}(a_s))}$
 \end{lemma}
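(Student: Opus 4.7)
The plan is to derive this multi-factor Künneth decomposition from the two-factor Künneth formula stated as the Theorem, by induction on the number of factors $s$.

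First I would record the identification
\[
\OO_X(a_1,\ldots,a_s) \;=\; p_1^*\OO_{\PP^{n_1}}(a_1)\otimes\cdots\otimes p_s^*\OO_{\PP^{n_s}}(a_s),
\]
which is just the definition of the multi-twist line bundle on $X$. In particular, writing $X=\PP^{n_1}\times Y$ with $Y=\PP^{n_2}\times\cdots\times\PP^{n_s}$ and $a'=(a_2,\ldots,a_s)$, this factors as $\OO_{\PP^{n_1}}(a_1)\boxtimes\OO_Y(a')$, putting us in the exact setting of the Künneth theorem quoted above.

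Next, the base case $s=1$ is a tautology. For $s=2$ the statement is exactly the Künneth formula. For the inductive step, assuming the formula holds for $s-1$ factors, I would apply the two-factor Künneth theorem to $X=\PP^{n_1}\times Y$ to get
\[
H^t\bigl(X,\OO_X(a_1,\ldots,a_s)\bigr)\;\cong\;\bigoplus_{p+q=t} H^p\bigl(\PP^{n_1},\OO(a_1)\bigr)\otimes H^q\bigl(Y,\OO_Y(a')\bigr),
\]
and then plug in the inductive hypothesis
\[
H^q\bigl(Y,\OO_Y(a')\bigr)\;\cong\;\bigoplus_{q=t_2+\cdots+t_s} H^{t_2}\bigl(\PP^{n_2},\OO(a_2)\bigr)\otimes\cdots\otimes H^{t_s}\bigl(\PP^{n_s},\OO(a_s)\bigr).
\]
Substituting and reindexing with $t_1=p$ collapses the double sum into the single sum over all tuples $(t_1,\ldots,t_s)$ with $\sum t_i=t$, which is the desired isomorphism.

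There is essentially no hard step: the Künneth theorem is invoked as a black box and the tensor-product-of-sums rearrangement is purely formal. The only point requiring care is bookkeeping of the indices in the inductive step (making sure the two-stage direct sum over $p+q=t$ and then $t_2+\cdots+t_s=q$ is exactly the one-stage sum over $t_1+\cdots+t_s=t$), and checking that one may apply the two-factor Künneth formula to $\PP^{n_1}\times Y$, which is legitimate since $Y$ is again a projective variety and $\OO_Y(a')$ is coherent.
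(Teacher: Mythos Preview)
Your induction argument is correct and is precisely the standard way to pass from the two-factor K\"unneth formula to the $s$-factor version. The paper itself gives no proof of this lemma at all---it is simply stated immediately after the K\"unneth theorem as an evident consequence---so your write-up is, if anything, more detailed than what the paper provides.
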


\begin{rem}
\begin{enumerate}
 \item We will often say “regular” instead of “ $(0,\cdots,0)$-regular ”, and “ $p$-regular ” instead of “ $(p,\cdots, p)$-regular ”. 
We define the regularity of $F$, $Reg(F)$, as the least integer $p$ such that $F$ is $p$-regular. We set $Reg(\mathscr{F}) = -\infty$ if there is no such integer.
\item The K\"{u}nneth's formula tells that $\OO(a_1,\cdots,a_s)$ is regular if and only if $a_i\geq0$ , $i=1,\cdots,s$.
In fact \[H^{n_1+\cdots+n_s}(\OO(a_1-n_1,\cdots,a_s-n_s)) = H^{n_1}(\OO(a_1-n_1))\otimes\cdots\otimes H^{n_s}(\OO(a_s-n_s)) = 0\]
if and only if $a_i\geq0$ , $i=1,\cdots,s$.\\
Since
\[H^{n_1}(\OO(a_1-n_1, a_2, \cdots, a_s)) \cong H^{n_1}(\OO(a_1-n_1))\otimes H^0(\OO(a_2))\otimes\cdots\otimes H^0(\OO(a_s))\]
\[H^{n_2}(\OO(a_1, a_2-n_2, \cdots, a_s)) \cong H^0(\OO(a_1))\otimes H^{n_2}(\OO(a_2-n_2))\otimes\cdots\otimes H^0(\OO(a_s))\]
$\cdots\cdots\cdots\cdots\cdots\cdots\cdots\cdots\cdots\cdots\cdots\cdots\cdots\cdots\cdots\cdots\cdots\cdots\cdots$
\[H^{n_s}(\OO(a_1, a_2, \cdots, a_s-n_s)) \cong H^0(\OO(a_1))\otimes H^0(\OO(a_2))\otimes\cdots\otimes H^{n_s}(\OO(a_s-n_s))\]
we see
that, if $\OO(a_1,\cdots,a_s)$ is regular, then we have $a_1,\cdots,a_s\geq0$.
Specifically $\OO$ is regular while $\OO(-1,\cdots,-1)$ is not and so $Reg(\OO) = 0$
%Moreover in a similar way we can see that Reg(O ⊠ ΩaPm(a + 1)) = 0 for any 1 ≤ a ≤ m − 1
\end{enumerate}
\end{rem}

\section{Splitting Criteria for Vector Bundles on Multiprojective spaces}

\noindent We extend the notion of regularity to $X = \PP^{n_1}\times\cdots\times\PP^{n_s}$ in order to prove some splitting criteria of vector bundles.
We need the following definition:

\begin{definition} A vector bundle $E$ on $X$ is arithmetically Cohen-Macaulay (aCM) if, for any $\displaystyle{0<i<\sum_{j=i}^sn_j}$
and for any integer $t$, $H^i(E(t,\cdots,t)) = 0$.
\end{definition}

\begin{proposition}
\begin{enumerate}
 \item The K\"{u}nneth's formula gives: $\OO(a_1,\cdots,a_s)$ is aCM if and only if $a_i-a_j\geq-n_i$ and
$a_j-a_i\geq-n_j$ for $i=1,\cdots,s$.
\item On $X$ we have the following Koszul complexes:
\[K_1:0 \rightarrow \OO(-n_1-1,\cdots,-n_s-1)\rightarrow \OO(-n_1,\cdots,-n_s-1)^{{n_1+1}\choose{n_1}}\rightarrow\cdots\rightarrow
\OO(0,-n_2-1,\cdots-n_s-1)\rightarrow0\]
\[K_2:0 \rightarrow \OO(0,-n_2-1,\cdots-n_s-1)\rightarrow\OO(0,0,\cdots,-n_s-1)\rightarrow \OO(0,0,\cdots,-n_s)^{{n_s+1}\choose{n_s}}\rightarrow\cdots\rightarrow\OO\rightarrow0\]
\[K_3:0 \rightarrow \OO(-n_1-1,\cdots,-n_s-1)\rightarrow \OO(-n_1,\cdots,-n_s-1)^{{n_1+1}\choose{n_1}}\rightarrow\cdots
\rightarrow\OO(0,\cdots-n_s-1)\rightarrow\cdots\rightarrow\OO\rightarrow0\]

\end{enumerate}

\noindent On taking cohomology of the the above exact sequences we get the isomorphisms:
\[H^{n_s}(\OO(0,0,\cdots,-n_s-1))\cong H^{n_1+\cdots+n_s}(\OO(-n_1-1,\cdots,-n_s-1))\]
\[H^{n_j}(\OO(0,\cdots,0,-n_j-1,0,\cdots,0))\cong H^{n_1+\cdots+n_s}(\OO(-n_1-1,\cdots,-n_s-1))\]
\[H^0(\OO)\cong H^{n_s}(\OO(0\cdots,-n_s-1))\]
\[H^0(\OO)\cong H^{n_j}(\OO(0,\cdots,0,-n_j-1,0\cdots,0))\]
\end{proposition}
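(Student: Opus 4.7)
The plan is to handle the three parts of the proposition separately, with the Künneth formula from the preceding Lemma and the standard fact that $H^q(\PP^n, \OO(d))$ is nonzero exactly in the ranges $(q, d) = (0, d \geq 0)$ or $(q, d) = (n, d \leq -n - 1)$ as the common tools.

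For part~(1), I would apply Künneth to expand $H^i\bigl(\OO_X(a_1 + t, \ldots, a_s + t)\bigr)$ as a direct sum over tuples $(i_1, \ldots, i_s)$ with $\sum i_k = i$. Such a summand is nonzero only when each factor $H^{i_k}(\OO_{\PP^{n_k}}(a_k + t))$ is nonzero, which forces $i_k \in \{0, n_k\}$ with the corresponding sign constraint on $a_k + t$. For $0 < i < \sum n_j$, nonvanishing at some $t$ corresponds to choosing a proper nonempty subset $S \subset \{1, \ldots, s\}$ of ``top'' indices together with a value $t$ satisfying $t \leq -n_k - 1 - a_k$ for $k \in S$ and $t \geq -a_j$ for $j \notin S$. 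The aCM condition is then the emptiness, for every such $S$, of the interval $\bigl[\max_{j \notin S}(-a_j),\, \min_{k \in S}(-n_k - 1 - a_k)\bigr]$, and I would translate this into the displayed pairwise inequalities.

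For part~(2), I would take the standard Koszul resolution
\[0 \to \OO_{\PP^{n_j}}(-n_j - 1) \to \OO_{\PP^{n_j}}(-n_j)^{\binom{n_j+1}{n_j}} \to \cdots \to \OO_{\PP^{n_j}}(-1)^{n_j+1} \to \OO_{\PP^{n_j}} \to 0\]
and pull it back along the $j$-th projection $p_j : X \to \PP^{n_j}$. Then $K_1$ is the $j = 1$ pullback twisted by $\OO_X(0, -n_2 - 1, \ldots, -n_s - 1)$; $K_2$ is the total tensor product of the pulled-back Koszul complexes for $j = 2, \ldots, s$; and $K_3$ is the total tensor product of all $s$ pulled-back Koszul complexes, giving a resolution of $\OO_X$ by twists of the structure sheaf. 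Exactness survives in each case because pullback of a locally free resolution is locally free and the tensor product of bounded complexes of flat sheaves resolving $\OO_X$ again resolves $\OO_X$.

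For part~(3), I would split each Koszul complex into short exact sequences and iterate the long exact cohomology sequence. In $K_1$, each intermediate term $\OO_X(-k, -n_2 - 1, \ldots, -n_s - 1)^{\binom{n_1+1}{k}}$ with $1 \leq k \leq n_1$ has totally vanishing cohomology, since the first tensor factor $\OO_{\PP^{n_1}}(-k)$ has $H^q = 0$ for all $q$ (as $-n_1 \leq -k \leq -1$), and Künneth then forces every $H^q$ of the full product to vanish. The resulting chain of connecting isomorphisms yields $H^{n_1 + \cdots + n_s}\bigl(\OO(-n_1-1, \ldots, -n_s-1)\bigr) \cong H^{n_2 + \cdots + n_s}\bigl(\OO(0, -n_2-1, \ldots, -n_s-1)\bigr)$. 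The same dévissage applied to the successive Koszul blocks in $K_2$ chains the identification down to $H^{n_s}(\OO(0, \ldots, 0, -n_s - 1))$, and one last Koszul step identifies the latter with $H^0(\OO_X)$; the isomorphisms involving $\PP^{n_j}$ for $j \neq s$ follow by symmetry. The main obstacle is the combinatorial bookkeeping in part~(1), where the pairwise inequalities must be shown to be exactly the right obstruction to simultaneous top-and-zero cohomology across all proper nonempty subsets $S$ and all $t$.
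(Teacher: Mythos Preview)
The paper does not supply a separate proof for this proposition: the Koszul complexes and the resulting cohomological isomorphisms are simply asserted, and part~(1) is stated as a consequence of the K\"unneth formula without further argument. Your outline for parts~(2) and~(3) is correct and matches the paper's implicit reasoning---pull back the Koszul resolution from each factor, tensor them together, and then use that every intermediate term has a tensor factor $\OO_{\PP^{n_k}}(-c)$ with $1\le c\le n_k$ (hence vanishing cohomology in all degrees) to chain the connecting homomorphisms into the displayed isomorphisms.

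There is, however, a genuine obstruction in part~(1). Your reduction is exactly right: by K\"unneth, failure of aCM is equivalent to the existence of a proper nonempty subset $S\subset\{1,\dots,s\}$ and an integer $t$ with $a_k+t\le -n_k-1$ for all $k\in S$ and $a_j+t\ge 0$ for all $j\notin S$. But for $s\ge 3$ this is \emph{not} equivalent to the failure of the pairwise inequalities. Concretely, on $\PP^2\times\PP^2\times\PP^2$ the line bundle $\OO(0,3,1)$ is aCM: for each $t$ with $-5\le t\le -1$ at least one of the three twists $t,\,t+3,\,t+1$ lies in $\{-1,-2\}$, so that factor has vanishing cohomology in every degree and K\"unneth kills all $H^i$; for $t\ge 0$ only $H^0$ survives and for $t\le -6$ only $H^6$. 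Yet $a_1-a_2=-3<-n_1=-2$, so the pairwise condition fails. Thus the ``only if'' direction of part~(1) is false as stated, and the combinatorial bookkeeping you flag as the main obstacle cannot in fact be completed. Your argument does establish the ``if'' direction (the pairwise inequalities are sufficient for aCM), but the converse is a too-optimistic extrapolation of the biprojective case in~\cite{3} and does not hold for $s\ge 3$.
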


\noindent The following results were proved by Ballico and Malaspina \cite{4} for a biprojective space $\PP^{n}\times\PP^{m}$
which we extend to a multi projective space $\PP^{n_1}\times\cdots\times\PP^{n_s}$ in themain results section.

\begin{lemma}[Theorem 1.3 \cite{3}]
 Let $E$ be a rank $r$ vector bundle on $\PP^n\times\PP^m$. Then the following conditions
are equivalent:
  \begin{enumerate}
    \renewcommand{\theenumi}{\alph{enumi}}
    \item for any $i = 1, \cdots , m + n-1$ and for any integer $t$, 
    \[H^i(E(t, t)\otimes \OO(j, k)) = 0 \]
whenever $j + k = -i$, $-n\leq j \leq0$ and $-m\leq k \leq 0$.
\item There are $r$ integers $t_1,\cdots, t_r$ such that $\displaystyle{E=\bigoplus_{i=1}^r\OO(t_i, t_i)}$.
\end{enumerate}
\end{lemma}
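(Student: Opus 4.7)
The plan is to dispatch (b)$\Rightarrow$(a) by a direct Künneth computation and (a)$\Rightarrow$(b) by induction on the rank $r$. For the easy direction, writing $E=\bigoplus_{\ell=1}^r\OO(t_\ell,t_\ell)$ reduces the question to showing $H^i\bigl(\OO(t+t_\ell+j,t+t_\ell+k)\bigr)=0$ in the prescribed range, and Künneth decomposes this into summands $H^p(\OO_{\PP^n}(\cdot))\otimes H^q(\OO_{\PP^m}(\cdot))$ with $p+q=i$; since $p\in\{0,n\}$ and $q\in\{0,m\}$, the range $1\le i\le n+m-1$ leaves only $(p,q)=(n,0)$ with $i=n$ or $(p,q)=(0,m)$ with $i=m$, and each forces respectively $j-k\le-n-1$ or $j-k\ge m+1$, both incompatible with $j\in[-n,0]$ and $k\in[-m,0]$. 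The rank-one base case of the converse is the same computation in reverse: if $\OO(a,b)$ satisfies (a) and $a>b$, the choice $i=m$, $j=0$, $k=-m$, $t=-b-1$ produces a nonzero $H^m(\OO(a-b-1,-m-1))$, contradicting (a); the case $b>a$ is symmetric, forcing $a=b$.

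For the inductive step I would assume the result for rank $r-1$ and let $t_0$ be the largest integer with $H^0(E(-t_0,-t_0))\ne 0$; this is well defined because $\OO(-t,-t)$ is very ample for $t\ll 0$ while Serre vanishing applied to $E^\vee$ bounds the set from above. A nonzero section $s\in H^0(E(-t_0,-t_0))$ then yields an injection $\phi:\OO(t_0,t_0)\hookrightarrow E$ (injective because any nonzero map from a line bundle on an integral variety has zero kernel), and I set $F=\cok\phi$. The endgame is to (i) verify $F$ is locally free of rank $r-1$, (ii) transfer condition (a) from $E$ to $F$ via the long exact sequence of
\[0\to\OO(t_0,t_0)\to E\to F\to 0,\]
(iii) apply the inductive hypothesis to obtain $F\cong\bigoplus_{\ell=2}^r\OO(t_\ell,t_\ell)$, and (iv) split the extension by observing
\[\operatorname{Ext}^1\bigl(F,\OO(t_0,t_0)\bigr)=\bigoplus_{\ell=2}^r H^1\bigl(\OO(t_0-t_\ell,t_0-t_\ell)\bigr)=0,\]
the vanishing holding because any nonzero Künneth summand of $H^1(\OO(c,c))$ would force the same integer $c$ to be simultaneously $\ge 0$ and $\le -n-1$ (or $\le -m-1$), which is impossible.

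The main obstacle is step (i), namely showing that $\phi$ is a subbundle embedding rather than just a subsheaf inclusion. My strategy is to pass to the saturation $L\supset\im\phi$ inside $E$: since $\PP^n\times\PP^m$ is smooth, $L$ is rank-one reflexive, hence a line bundle $\OO(a,b)$ with $a,b\ge t_0$, and I must rule out $(a,b)\ne(t_0,t_0)$. I would attempt this by restricting to a general ruling of one of the two projections, where $E$ becomes a bundle on $\PP^m$ or $\PP^n$ whose condition (a) specialises to a Horrocks-type vanishing allowing Grothendieck's splitting, and then propagating the resulting constraint back using the Koszul complexes $K_1$ and $K_2$ of Proposition 2.2 to contradict the maximality of $t_0$. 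Step (ii) reduces via the long exact sequence to the same Künneth vanishings used in the easy direction, except at the boundary index $i=n+m-1$, where the connecting map targets a possibly nonzero $H^{n+m}$ of $\OO(t_0+t+j,t_0+t+k)$; here I expect Serre duality applied to the locally free $E$ to force the connecting map to vanish.
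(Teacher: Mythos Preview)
The paper does not prove this lemma at all; it is quoted verbatim as Theorem~1.3 of \cite{3}, and even the generalization (Theorem~3.1 here) carries only the one-word proof ``\cite{3}''. So there is no in-paper argument to compare against. That said, the method visible in the paper's fully written proof (Theorem~3.3) makes clear what the intended route is: one takes $t_0$ so that $E(-t_0,-t_0)$ is regular but $E(-t_0-1,-t_0-1)$ is not, observes that condition~(a) kills every intermediate cohomology in the definition of regularity, so the only obstruction is $H^{n+m}\bigl(E(-t_0-1-n,-t_0-1-m)\bigr)\ne 0$, and then Serre duality converts this into $H^0\bigl(E^\vee(t_0,t_0)\bigr)\ne 0$. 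One then uses the Serre/Yoneda pairing to produce maps $\OO(t_0,t_0)\to E\to\OO(t_0,t_0)$ whose composite is nonzero, so $\OO(t_0,t_0)$ splits off \emph{as a direct summand}, and induction proceeds on a bundle of smaller rank.

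Your (b)$\Rightarrow$(a) and the rank-one base case are correct. The inductive step, however, has a genuine gap exactly where you flag it as ``the main obstacle''. Passing to the saturation $L=\OO(a,b)$ only guarantees that $E/L$ is torsion-free, not locally free; on a product of projective spaces of total dimension $\ge 2$ a saturated line subbundle can perfectly well have non--locally-free quotient (think of a generic $\OO(1)\hookrightarrow T_{\PP^2}$, whose saturated quotient is $I_p(2)$). Your plan to exclude $(a,b)\ne(t_0,t_0)$ by restricting to a ruling does not obviously work either, because condition~(a) concerns cohomology on the whole product, not on fibers, and no base-change argument is supplied. Even granting step~(i), step~(ii) is also incomplete at the boundary $i=n+m-1$: from the long exact sequence you only get an injection $H^{n+m-1}(F(\cdots))\hookrightarrow H^{n+m}\bigl(\OO(t_0+t+j,t_0+t+k)\bigr)$, and the target can be nonzero for $t\ll 0$; your appeal to ``Serre duality applied to $E$'' does not explain why the connecting map vanishes, and unwinding it leads back to needing $H^{n+m-1}(F(\cdots))=0$, which is circular.

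The fix is precisely the Ballico--Malaspina manoeuvre above: do not pass to a quotient at all, but use duality and the pairing to split off $\OO(t_0,t_0)$ directly. This sidesteps both the local-freeness question and the boundary transfer in one stroke, and the complement is then automatically a vector bundle of rank $r-1$ satisfying~(a).
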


\begin{lemma}[Theorem 1.4 \cite{3}]
Let $E$ be a vector bundle on $\PP^n\times\PP^m$. Then the following conditions are equivalent:
\begin{enumerate}
    \renewcommand{\theenumi}{\alph{enumi}}
    \item for any $i = 1,\cdots, m+n-1$ and for any integer $t$, 
\[H^i(E(t, t)\otimes \OO(j, k)) = 0 \]
whenever $j + k = -i$, $-n\leq j \leq0$ and $-m\leq k \leq 0$ but $(j, k) \neq (-n, 0),(0, -m)$.
\item $E$ is a direct sum of the line bundles $\OO, \OO(0, 1)$ and $\OO(1, 0)$ with some balanced twist $(t, t)$.
\end{enumerate}
\end{lemma}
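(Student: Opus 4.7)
The plan is to handle the two implications separately. For (b) $\Rightarrow$ (a), it suffices to verify each line bundle type in turn: for $L \in \{\OO, \OO(0,1), \OO(1,0)\}$ with balanced twist $(t,t)$ and $(j,k)$ in the prescribed range with $(j,k) \ne (-n,0)$ and $(j,k) \ne (0,-m)$, I would apply Lemma~1.2 to expand $H^i(L(t,t) \otimes \OO(j,k))$ as a sum of tensor products of line bundle cohomologies on each factor, and confirm that on at least one factor the twist falls outside the non-vanishing window. The two excluded corners are precisely where the new summands $\OO(0,1)$ and $\OO(1,0)$ produce non-zero cohomology for some $t$, which is why these line bundles cannot appear under the stricter vanishing hypothesis of Lemma~2.3.

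For the substantive direction (a) $\Rightarrow$ (b), I would proceed by induction on $\rk E$. The crucial dichotomy is whether the corner cohomology also vanishes. If $H^n(E(t,t) \otimes \OO(-n,0)) = 0$ and $H^m(E(t,t) \otimes \OO(0,-m)) = 0$ for every $t$, then (a) strengthens to the hypothesis of Lemma~2.3 and $E$ decomposes as $\bigoplus \OO(t_i, t_i)$, a special case of (b). Otherwise, after possibly swapping the two factors, choose $t_0$ maximal with $H^m(E(t_0, t_0) \otimes \OO(0,-m)) \ne 0$. The key tool is then the Koszul complex $K_2$ of Proposition~2.2, which in the biprojective case reads $0 \to \OO(0,-m-1) \to \OO(0,-m)^{m+1} \to \cdots \to \OO \to 0$. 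Tensoring $K_2$ with an appropriate twist of $E$ and running the hypercohomology spectral sequence (which converges to zero because the complex is exact), the vanishings imposed by (a) together with the isomorphisms displayed at the end of Proposition~2.2 convert the non-vanishing at the corner into a non-zero section $\OO(t_0, t_0+1) \hookrightarrow E$. A symmetric argument using $K_1$ handles the other corner and yields summands of type $\OO(t_0+1, t_0)$.

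The main obstacle is the final step: one must show that the resulting non-zero map $\OO(t_0, t_0+1) \to E$ is a direct summand and that the rank $r-1$ quotient $E'$ inherits condition (a) so that the induction can continue. Saturatedness of the image should follow from the maximality of $t_0$, since a non-saturated image would produce an additional non-vanishing at a higher twist and contradict the choice. Inheritance of (a) by $E'$ is then obtained by tensoring the short exact sequence $0 \to \OO(t_0, t_0+1) \to E \to E' \to 0$ with $\OO(t,t) \otimes \OO(j,k)$ throughout the prescribed range, and exploiting the resulting long exact cohomology sequence together with the explicitly computable cohomology of $\OO(t_0, t_0+1)$ (via Lemma~1.2) to transfer all the required vanishings from $E$ to $E'$. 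Iterating this reduction exhausts $E$ as the required direct sum of twists of $\OO$, $\OO(0,1)$, and $\OO(1,0)$.
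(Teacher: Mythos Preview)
The paper does not supply its own proof of this lemma; it is quoted from reference~\cite{3}, and the generalization (Theorem~3.2) likewise carries only a citation in place of a proof. The closest thing to a proof in this paper is the argument for Theorem~3.3, and its method differs from yours at the crucial step. There, one runs Koszul resolutions against both $E$ and $E^\vee$ to manufacture surjections onto the non-vanishing cohomology group from \emph{two} sides, obtaining maps $g\colon L\to E$ and $f\colon E\to L$ (with $L$ the candidate line-bundle summand). The Yoneda pairing then forces $f\circ g$ to be a nonzero endomorphism of $L$; since line bundles are simple, $f\circ g$ is a nonzero scalar and $L$ splits off directly. No saturation argument is needed.

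Your outline has a genuine gap precisely where you identify the ``main obstacle''. You propose to produce a single injection $\OO(t_0,t_0+1)\hookrightarrow E$ and argue, via maximality of $t_0$, that the image is saturated. But saturation only tells you the cokernel $E'$ is locally free; it does \emph{not} make $\OO(t_0,t_0+1)$ a direct summand. Consequently, iterating your reduction yields only a filtration of $E$ whose graded pieces are line bundles of the three permitted types, whereas condition~(b) asks for a direct-sum decomposition. To finish you would have to prove every such extension splits, i.e.\ that $\mathrm{Ext}^1(E',\OO(t_0,t_0+1))=0$, and this is not automatic (for instance when one factor is $\PP^1$, the relevant $H^1$ of a line bundle need not vanish). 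Either carry out that Ext computation carefully, controlling the twists that actually occur via the order in which you peel off summands, or switch to the two-sided pairing argument used in the paper for Theorem~3.3.
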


\begin{lemma} [Theorem 3.5, \cite{3}]
Let $E$ be a vector bundle on $\PP^n\times\PP^m$ with $Reg(E) = 0$. Then the following conditions are equivalent:
\begin{enumerate}
    \renewcommand{\theenumi}{\alph{enumi}}
    \item for any $i = 1,\ldots , \min(r,m + n)-1$ and for any integer $t$, 
      \[H^i(E(-1, -1)\otimes\OO(j, k)) = 0\]			
      whenever $j+k\geq -i$, $-n< j\leq 0$ and $-m < k \leq 0$.
	\\
      Moreover for any $u = 1,\ldots , n - 1$, $H^{n+u}(E(-1, -1)\otimes\OO(-n, -u -1)) = 0 $ and for
      any $v = 1, \dots , m - 1$, $H^{m+v}(E(-1,-1)\otimes\OO(-v-1,-m)) = 0$.
    \item $E$ has one of the following bundles as a direct summand: $\OO$, $\OO(0, 1)$, $\OO(1, 0)$, $\OO\boxtimes\Omega^a_{\PP^m}(a+1)$
      (where $1\leq a \leq m-1)$ or $\Omega^a_{\PP^n}(a + 1)\boxtimes\OO$ (where $1\leq a\leq n-1$).
 \end{enumerate}
 
\end{lemma}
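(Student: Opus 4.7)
My approach is to verify (b) $\Rightarrow$ (a) by a direct K\"unneth-and-Bott computation, and to prove (a) $\Rightarrow$ (b) by extracting a summand from the Beilinson monad of $E$ on $X=\PP^n\times\PP^m$. The nontrivial point will be promoting a nonzero morphism $F\to E$ detected on the Beilinson page into an actual splitting, and this is precisely where the extra side-diagonal hypotheses of (a) come into play.

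\smallskip

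\noindent\textbf{The direction (b) $\Rightarrow$ (a).} Suppose $E=F\oplus E'$ with $F$ among the listed bundles. The K\"unneth formula (Theorem~1.2) expresses each $H^{i}(F(-1,-1)\otimes\OO(j,k))$ as a tensor product of cohomologies of twisted cotangent bundles on $\PP^n$ and $\PP^m$, each of which is determined by Bott's formula. A case check for the five shapes $\OO,\OO(0,1),\OO(1,0),\OO\boxtimes\Omega^a_{\PP^m}(a+1),\Omega^a_{\PP^n}(a+1)\boxtimes\OO$ in the range $j+k\ge-i$, $-n<j\le 0$, $-m<k\le 0$, and on the two side-diagonals, gives the required vanishings without difficulty.

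\smallskip

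\noindent\textbf{The direction (a) $\Rightarrow$ (b).} Apply the Beilinson spectral sequence on $X$: $E(-1,-1)$ admits a resolution with $E_1$-terms of the form
\[ H^{q}\!\bigl(E(-1,-1)\otimes\OO(j,k)\bigr)\otimes\Omega^{-j}_{\PP^n}(-j)\boxtimes\Omega^{-k}_{\PP^m}(-k), \]
indexed by $-n\le j\le 0$ and $-m\le k\le 0$, converging to $E(-1,-1)$. The assumption $Reg(E)=0$ clears the boundary positions via Definition~1.1, while the interior vanishings of (a) clear the remaining entries with $j+k\ge -q$. What survives is a short explicit list of positions $(j,k,q)$ whose $\Omega^{-j}\boxtimes\Omega^{-k}$ factors are exactly $\OO,\OO(0,1),\OO(1,0),\OO\boxtimes\Omega^a_{\PP^m}(a+1),\Omega^a_{\PP^n}(a+1)\boxtimes\OO$. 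Picking such a surviving position, the associated nonzero cohomology group furnishes a nonzero morphism $\varphi\colon F\to E$ from the corresponding candidate $F$.

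\smallskip

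\noindent\textbf{Main obstacle.} The delicate step is to upgrade $\varphi$ to a splitting, equivalently to show that the extension class of $0\to F\to E\to \cok\varphi\to 0$ vanishes in $\mathrm{Ext}^{1}(\cok\varphi,F)$. Resolving $\cok\varphi$ by Beilinson and applying $\Hom(-,F)$ rewrites this obstruction as a further cohomology group of $E(-1,-1)$ tensored with an appropriate $\OO(j,k)$. The two families of side-diagonal vanishings
\[ H^{n+u}(E(-1,-1)\otimes\OO(-n,-u-1))=0,\qquad H^{m+v}(E(-1,-1)\otimes\OO(-v-1,-m))=0, \]
supplied separately by (a), are exactly what is needed to annihilate this obstruction when $F$ is of $\Omega^a$-type; for the three line bundle candidates the splitting is already ensured by $Reg(E)=0$ together with the interior vanishings, by an argument in the spirit of Lemma~2.3 above. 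Without these side-diagonal hypotheses $F$ could only be guaranteed to appear as a subbundle or a quotient, not a summand, which accounts for their presence in the statement.
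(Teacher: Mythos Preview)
The paper does not prove this lemma; it is quoted from \cite{3}. The relevant comparison is the paper's proof of the generalization, Theorem~3.3, whose method for the biprojective case is the same as in \cite{3}. That argument does \emph{not} run through the Beilinson spectral sequence and an $\mathrm{Ext}^1$--obstruction. Instead, from the nonvanishing $H^{a+n}\bigl(E(-1,-1)\otimes\OO(-a,-n)\bigr)\neq 0$ (say), one tensors two truncated Koszul complexes by $E$ and by $E^{\vee}$; the hypotheses of~(a), together with Serre duality, force surjections
\[
\psi_1\colon H^{0}\bigl(E\otimes(\OO\boxtimes\Omega^{a}_{\PP^m})^{\vee}(-a-1)\bigr)\twoheadrightarrow H^{a+n}\bigl(E(-a-1,-n-1)\bigr),
\qquad
\psi_2\colon H^{0}\bigl(E^{\vee}\otimes\OO\boxtimes\Omega^{a}_{\PP^m}(a+1)\bigr)\twoheadrightarrow H^{m-a}\bigl(E^{\vee}(0,a-m)\bigr),
\]
and the Yoneda pairing of the targets lands in $H^{n+m}(\OO(-n-1,-m-1))\cong\CC$. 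One then picks $g\in\Hom(\OO\boxtimes\Omega^{a}_{\PP^m}(a+1),E)$ and $f\in\Hom(E,\OO\boxtimes\Omega^{a}_{\PP^m}(a+1))$ with $f\circ g\neq 0$; since $\Omega^{a}_{\PP^m}(a+1)$ is simple, $f\circ g$ is a nonzero scalar and $F$ splits off. The side-diagonal vanishings in~(a) are precisely what make $\psi_1$ and $\psi_2$ surjective.

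Your route is genuinely different, and the splitting step as written has a gap. A nonzero $\varphi\colon F\to E$ extracted from a Beilinson page need not be a monomorphism of bundles, so there is no exact sequence $0\to F\to E\to\cok\varphi\to 0$ to split. Even granting injectivity, your claim that ``resolving $\cok\varphi$ by Beilinson and applying $\Hom(-,F)$ rewrites the obstruction as a cohomology group of $E(-1,-1)$'' is not justified: a Beilinson resolution of $\cok\varphi$ has coefficients $H^{\bullet}(\cok\varphi\otimes\cdots)$, not $H^{\bullet}(E\otimes\cdots)$, and you have no control over those. The fix is exactly what the paper does: produce a map in the \emph{other} direction as well (this is where the dual Koszul complex and the second family of side-diagonal vanishings enter), and invoke simplicity of $F$ rather than an $\mathrm{Ext}^{1}$ computation on an unknown cokernel.
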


\section{Main Results}

Generalization of results in \cite{3} to $\PP^{n_1}\times\cdots\times\PP^{n_s}$.

\begin{theorem}
 Let $E$ be a rank $r$ vector bundle on $\PP^{n_1}\times\cdots\times\PP^{n_s}$. Then the following conditions
are equivalent:
  \begin{enumerate}
    \renewcommand{\theenumi}{\alph{enumi}}
    \item for any $i = 1,\ldots,d-1$ with ${d=\displaystyle\sum_{i=1}^sn_i}$ and for any integer $t$, 
    \[H^i(E(t,\cdots,t)\otimes \OO(k_1,\dots,k_s)) = 0 \]
      whenever $d = -i$, $-n_j\leq k_j \leq0$.
\item There are $r$ integers $t_1,\ldots,t_r$ such that $\displaystyle{E\cong\bigoplus_{i=1}^r\OO(t_i,\cdots,t_i)}$.
\end{enumerate}
\end{theorem}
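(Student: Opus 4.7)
The implication (b)$\Rightarrow$(a) is a direct Künneth computation. For each summand $\OO(t_i,\cdots,t_i)$, the twist $\OO(t_i,\cdots,t_i)\otimes\OO(t,\cdots,t)\otimes\OO(k_1,\cdots,k_s)=\OO(t_i+t+k_1,\cdots,t_i+t+k_s)$ has its $i$-th cohomology decomposed by Theorem 1.2 into a sum over $(r_1,\cdots,r_s)$ with $\sum r_j=i$ of products $\bigotimes_j H^{r_j}(\OO_{\PP^{n_j}}(t_i+t+k_j))$. Nonzero factors force $r_j\in\{0,n_j\}$, and the constraints $-n_j\le k_j\le 0$ together with $\sum k_j=-i$ then force either all $r_j=0$ (so $i=0$) or all $r_j=n_j$ (so $i=d$). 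Hence every intermediate $i$ yields zero cohomology for every $t$.

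For (a)$\Rightarrow$(b) the plan is to induct on the rank $r$. The base $r=1$ follows by running the computation above in reverse: a line bundle $\OO(a_1,\cdots,a_s)$ satisfying (a) must have $a_1=\cdots=a_s$, since any imbalance produces a twist $t$ for which some mixed Künneth contribution to an intermediate $H^i$ survives. For the inductive step, after a balanced twist one may assume $\min\{c : H^0(E(-c,\cdots,-c))\ne 0\}=0$. The hypothesis (a), combined with Lemma 2.3 applied to suitable biprojective slices of $X$ and the cohomological isomorphisms of Proposition 2.2, should yield a section $\sigma\in H^0(E)$ whose zero scheme is empty, producing a short exact sequence $0\to\OO_X\to E\to F\to 0$ with $F$ locally free of rank $r-1$.

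The crux is then to verify that $F$ again satisfies (a). Twisting this sequence by $\OO(t,\cdots,t)\otimes\OO(k_1,\cdots,k_s)$ and taking the long cohomology sequence, $H^i(F\otimes\OO(t,\cdots,t)\otimes\OO(k_1,\cdots,k_s))$ is squeezed between $H^i$ of the twist of $E$ (which vanishes by (a)) and $H^{i+1}$ of the twist of $\OO_X$ (which vanishes by the Künneth computation already performed for (b)$\Rightarrow$(a)). At the boundary value $i=d-1$ one additionally needs surjectivity of the connecting map, which is supplied by the isomorphisms of Proposition 2.2. Once $F$ is seen to satisfy (a), the induction hypothesis gives $F\cong\bigoplus_{i=2}^r\OO_X(0,\cdots,0)$, and untwisting delivers the claimed balanced splitting of $E$.

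The main obstacle I anticipate is the nowhere-vanishing of $\sigma$ in the inductive step --- equivalently, showing that the minimal-twist section defines an injection $\OO_X\hookrightarrow E$ of sheaves whose image is a subbundle rather than merely a subsheaf with torsion cokernel. In the biprojective setting of Lemma 2.3 this is handled by a Castelnuovo--Mumford-type restriction argument on the factors; extending it to $s\ge 3$ factors requires iterating those restriction arguments and propagating vanishing across all factors through the cohomological isomorphisms of Proposition 2.2 and the Koszul complexes $K_1,K_2,K_3$ recorded there. This bookkeeping is the technical heart of the generalization.
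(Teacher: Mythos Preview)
The paper's own ``proof'' of this theorem is nothing more than the citation \cite{3}; no argument is written out, so there is little to compare against directly. That said, the technique of \cite{3}---which is visible in this paper's detailed proof of Theorem~3.3---is different from yours and sidesteps precisely the obstacle you flag. Rather than searching for a nowhere-vanishing section, one normalizes so that $\mathrm{Reg}(E)=0$, observes that $E(-1,\ldots,-1)$ is then not regular, and uses the hypothesis vanishing together with the Koszul sequences of Proposition~2.2 and Serre duality to produce surjections from $\Hom(\OO,E)=H^0(E)$ and from $\Hom(E,\OO)=H^0(E^\vee)$ onto a common nonzero cohomology group. A Yoneda-pairing argument then exhibits $g\colon\OO\to E$ and $f\colon E\to\OO$ with $f\circ g\ne 0$; since $\OO$ is simple, $\OO$ is a direct summand of $E$, and one inducts on the rank. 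This constructs the splitting map $f$ directly and never touches the zero locus of any section.

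Your plan has a second gap beyond the one you acknowledge. In checking that $F$ inherits condition~(a) at the boundary value $i=d-1$, what is actually needed is that the map $H^d(\OO(t+k_1,\ldots,t+k_s))\to H^d(E(t+k_1,\ldots,t+k_s))$ induced by your chosen section be \emph{injective} (equivalently, that the connecting map $H^{d-1}(F(\cdot))\to H^d(\OO(\cdot))$ vanish), not that the connecting map be surjective. For $t\le -2$ the source $H^d(\OO(\cdot))$ is nonzero, and Proposition~2.2 records isomorphisms among cohomology groups of $\OO$, not control over maps into cohomology of $E$, so this step does not go through as written. The dual-map method of \cite{3} sidesteps the issue entirely: once $\OO$ is a direct summand, the complementary summand trivially inherits every cohomological vanishing that $E$ satisfies.
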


\begin{proof}\cite{3}
%$(a)\Longrightarrow(b)$:
%Suppose $t\in\ZZ$ such that $E(t,\cdots,t)$ is regular but $E(t-1,\cdots,t-1)$ is not.\\
%\\
%$E(t-1,\cdots,t-1)$ is not regular $\Longleftrightarrow$
%\[H^{n_1+n_2+\cdots+n_{s}}(E(t-,\cdots,t-1)\otimes\OO(-n_1,\cdots,-n_{s}))\neq0\]

%Serre's duality gives $H^0(E^{\vee}(-t,\cdots,-t))\neq0$.\\
%\\
%Since $E(t,\cdots,t)$ is regular then  it is globally generated and thus we make the conclusion that $\OO$ is a direct summand of $E(t,\cdots,t)$

\end{proof}

\begin{theorem} 
Let $E$ be a vector bundle on $\PP^{n_1}\times\cdots\times\PP^{n_s}$. Then the following conditions are equivalent:
\begin{enumerate}
    \renewcommand{\theenumi}{\alph{enumi}}
    \item for any $i = 1,\ldots,d-1$ with ${d=\displaystyle\sum_{i=1}^sn_i}$ and for any integer $t$, 
      $H^i(E(t,\cdots,t)\otimes \OO(k_1,\dots,k_s)) = 0$ whenever $d = -i$, $-n_j\leq k_j \leq0$ \\
      but $(k_1,\cdots,k_s) \neq (-n_1,0,\cdots,0),(0,-n_2,\cdots,0),\ldots,(0,\cdots,0,-n_s)$.
    \item $E$ is a direct sum of the line bundles $\OO, \OO(0,\cdots,0, 1)$ , $\OO(0,\cdots,1, 0)$, $\dots$, and $\OO(1,0,\cdots, 0)$ with some balanced twist $(t,\cdots,t)$.
\end{enumerate}
\end{theorem}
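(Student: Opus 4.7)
\noindent The plan is to prove (b)$\Rightarrow$(a) by direct K\"unneth computation and (a)$\Rightarrow$(b) by induction on $r=\rk E$, following the pattern of the biprojective Lemma 2.5.

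\noindent For (b)$\Rightarrow$(a), check each allowed summand: the line bundle $\OO(t,\ldots,t)$ and the $s$ line bundles of the form $\OO(t,\ldots,t+1,\ldots,t)$ with the $+1$ in position $\ell$. Tensor with $\OO(k_1,\ldots,k_s)$ satisfying $\sum k_j=-i$ and $-n_j\leq k_j\leq 0$, and apply the K\"unneth lemma to express $H^i$ as a sum of products $\bigotimes_j H^{p_j}(\PP^{n_j},\OO(a_j))$ with $\sum p_j=i\geq 1$. Such a product can be nonzero only if each factor with $p_j=n_j$ satisfies $a_j\leq -n_j-1$ and each factor with $p_j=0$ satisfies $a_j\geq 0$; a short arithmetic check using the constraints on the $k_j$ then forces $(k_1,\ldots,k_s)$ to be one of the excluded indices $(0,\ldots,-n_\ell,\ldots,0)$, so (a) holds at every non-excluded multi-index.

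\noindent For (a)$\Rightarrow$(b) we induct on $r$. After a balanced twist we may assume the hypothesis holds with $t=0$. Write $\mathbf{e}_\ell=(0,\ldots,1,\ldots,0)$ for the $\ell$-th standard multidegree. The induction step will exhibit a splitting $E\cong\OO(\epsilon)\oplus E'$ where $\epsilon\in\{0,\mathbf{e}_1,\ldots,\mathbf{e}_s\}$ and $E'$ has rank $r-1$; the quotient $E'$ inherits (a) by additivity of cohomology on direct summands, so induction then closes the argument. To locate a candidate rank-one subbundle, tensor the Koszul complexes $K_1$, $K_2$, $K_3$ of Proposition 2.2 with $E$ and chase the resulting long exact cohomology sequences: the vanishings imposed by (a), together with the identifications listed in Proposition 2.2, relate the top cohomology $H^{n_1+\cdots+n_s}(E(-n_1-1,\ldots,-n_s-1))$ to $H^0(E)$ and to the $s$ groups $H^{n_\ell}(E\otimes\OO(0,\ldots,-n_\ell-1,\ldots,0))$, which are precisely the cohomologies the exclusion leaves uncontrolled. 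A Serre-duality argument then forces at least one of $H^0(E)$ or $H^0(E(-\mathbf{e}_\ell))$ (for some $\ell\in\{1,\ldots,s\}$) to be nonzero. A nonzero element of this group gives a nonzero map $\varphi:\OO(\epsilon)\to E$; a second application of (a) shows $\varphi$ is nowhere vanishing and that $\mathrm{Ext}^1(E/\OO(\epsilon),\OO(\epsilon))=0$, so the inclusion splits.

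\noindent The main obstacle is the Koszul/Serre-duality matching that assigns each excluded multi-index to a potential rank-one summand. For $s=2$ this matching is essentially diagonal and is the heart of Ballico--Malaspina's argument, carried out with two Koszul complexes used symmetrically. For general $s$ one must orchestrate $s$ such complexes in a coordinated way and rule out mixed contributions coming from the total Koszul complex $K_3$; the multi-index bookkeeping is substantially heavier than in the biprojective case, but the underlying logical structure is the same.
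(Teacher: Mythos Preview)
The paper does not actually prove this theorem: its entire proof is the citation ``\cite{3}''. So there is nothing in the paper to compare your argument against directly; one can only compare with the Ballico--Malaspina method as it appears in the paper's detailed proof of Theorem~3.3.

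Your direction (b)$\Rightarrow$(a) via K\"unneth is the standard computation and is fine.

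For (a)$\Rightarrow$(b) your global strategy (induct on the rank, peel off a line-bundle summand) is the right one, but the mechanism you propose for the splitting is not the one used in \cite{3}, and the step you gloss over is a genuine gap. You assert that once a nonzero $\varphi:\OO(\epsilon)\to E$ is found, ``a second application of (a) shows $\varphi$ is nowhere vanishing and that $\mathrm{Ext}^1(E/\OO(\epsilon),\OO(\epsilon))=0$.'' Neither follows from (a) in any evident way: cohomological vanishing for twists of $E$ does not control the zero locus of a single section, and the $\mathrm{Ext}^1$ group involves the quotient $E/\OO(\epsilon)$, which you do not yet know to be locally free, let alone to inherit (a).

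The argument in \cite{3}, visible in the paper's proof of Theorem~3.3, avoids both difficulties. One produces not only $g:\OO(\epsilon)\to E$ but also, via Serre duality and a second Koszul chase, a map $f:E\to\OO(\epsilon)$, and then checks through a Yoneda pairing that $f\circ g\neq 0$. Since $\OO(\epsilon)$ is simple, $f\circ g$ is a nonzero scalar, so $\OO(\epsilon)$ is a direct summand of $E$ with no need to analyze zero loci or any $\mathrm{Ext}^1$. Your outline becomes correct if you replace the nowhere-vanishing/$\mathrm{Ext}^1$ step by this two-sided ``maps in both directions'' argument; the Koszul bookkeeping you describe is then exactly what is needed to manufacture $f$ as well as $g$.
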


\begin{proof}\cite{3}
\end{proof}

\begin{theorem}
 Let $E$ be a rank $r$ vector bundle on $X=\PP^{n_1}\times\cdots\times\PP^{n_s}$ and let ${d=\displaystyle\sum_{i=1}^sn_i}$ with $Reg(E)=0$.
Then the following conditions are equivalent:
\begin{enumerate}
\item For any $i=1,\cdots,\min(r,d)-1$,
\[H^i(E(-1,\cdots,-1)\otimes\OO(k_1,\cdots,k_s))=0\] whenever $d\geq-i$, $-n_j\leq k_j\leq0$ for $j=1,\cdots,s$.\\
\\
Moreover for any $a_j=1,\cdots,n_{j-1}$
\begin{align*}
H^{a_1+n_2+n_3+\cdots+n_{s-1}+n_s}(E(-1,\cdots,-1)\otimes\OO(-a_1-1,-n_2,\cdots,-n_s))=0\\
H^{n_1+a_2+n_3+\cdots+n_{s-1}+n_s}(E(-1,\cdots,-1)\otimes\OO(-n_1,-a_2-1,\cdots,-n_s))=0\\
\cdots\cdots\cdots\cdots\cdots\cdots\cdots\cdots\cdots\cdots\cdots\cdots\cdots\cdots\cdots\cdots\cdots\cdots\cdots\\
H^{n_1+n_2+\cdots+n_{s-1}+a_s}(E(-1,\cdots,-1)\otimes\OO(-n_1,\cdots,-n_{s-1},-a_{s-1}))=0
\end{align*}

\item $E$ has one of the following bundles as a direct summand:
    \begin{enumerate}
    \renewcommand{\theenumi}{\alph{enumi}}
    \item $\OO$
    \item $\OO(0,\cdots,0,1)$, $\OO(0,\cdots,1,0)$,$\ldots$, $\OO(0,1,\cdots,0,0)$ and $\OO(1,0,\cdots,0,0)$
    \item $\OO\boxtimes\Omega_{\PP^{n_i}}^{a_i}(a_i+1)$ where $1\leq a_i\leq n_i-1$ or
    \item $\Omega_{\PP^{n_i}}^{a_i}(a_i+1)\boxtimes\OO$ where $1\leq a_i\leq n_i-1$ 
    \end{enumerate}
    
\end{enumerate}

\end{theorem}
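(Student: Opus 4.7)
The plan is to mirror the Ballico--Malaspina argument (Lemma~2.4 above) for the biprojective case, generalising every step to $s$ factors via the K\"unneth formula (Theorem~1.2) and a Beilinson-type spectral sequence on $X$. I treat the implication (b)$\Rightarrow$(a) by direct cohomology computation and the implication (a)$\Rightarrow$(b) by peeling off one direct summand at a time.

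For (b)$\Rightarrow$(a), since cohomology commutes with direct sums it is enough to verify each vanishing for the individual building blocks of the list. For the balanced line bundles $\OO(t,\ldots,t)$ shifted by a single unit in one coordinate, K\"unneth reduces the claim to the standard vanishing of $H^i(\PP^{n_j},\OO(k))$ for $-n_j\le k\le 0$. For the partial cotangent bundles $\OO\boxtimes\cdots\boxtimes\Omega^{a_i}_{\PP^{n_i}}(a_i+1)\boxtimes\cdots\boxtimes\OO$, Bott's formula pinpoints $k=-a_i-1$ as the unique twist producing nontrivial cohomology on the $i$-th factor and places it in degree $a_i$; tensoring with $\OO(-1,\ldots,-1)$ and combining via K\"unneth matches both the range conditions and the boundary displays listed in (1).

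For the harder implication (a)$\Rightarrow$(b), the main tool is the Beilinson resolution on $X$ obtained by tensoring together the Beilinson resolutions on each factor $\PP^{n_i}$. Applied to $E(-1,\ldots,-1)$, it furnishes a convergent spectral sequence whose $E_1$-page takes the form
\[
E_1^{p,q} = \bigoplus_{a_1+\cdots+a_s=-p} H^q\!\bigl(E(-1,\ldots,-1)\otimes \OO(-a_1,\ldots,-a_s)\bigr)\otimes \Omega^{a_1}_{\PP^{n_1}}(a_1)\boxtimes\cdots\boxtimes \Omega^{a_s}_{\PP^{n_s}}(a_s)
\]
abutting to $E$. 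The regularity assumption $Reg(E)=0$ together with the additional boundary vanishings listed in (1) kills every $E_1^{p,q}$ in which two or more of the $a_i$ are nonzero, leaving only the balanced line bundles $\OO(t,\ldots,t)$ with $t\in\{0,1\}$ and the single-factor contributions $\OO\boxtimes\cdots\boxtimes\Omega^{a_i}_{\PP^{n_i}}(a_i+1)\boxtimes\cdots\boxtimes\OO$ of (b).

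To upgrade this resolution into a genuine direct-sum decomposition, I select a surviving building block $F$ with largest index $p$, produce a nonzero morphism between $F$ and $E$ using the spectral sequence together with Serre duality, and argue that the vanishings in (1) force $\Hom(F,E/F)=0$ and $\mathrm{Ext}^1(E/F,F)=0$, so the morphism splits off $F$ as a summand; iterating on the rank finishes the proof. The main obstacle I anticipate is the precise bookkeeping in the multi-factor spectral sequence: for $s\ge 3$ there are many mixed cotangent tensor terms $\Omega^{a_{i_1}}\boxtimes\cdots\boxtimes\Omega^{a_{i_\ell}}$ with $\ell\ge 2$, and I must verify that the specific list of boundary vanishings displayed in (1) is exactly what is needed to kill all of them. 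This genuine multi-factor step is what goes beyond the biprojective computation in Ballico--Malaspina.
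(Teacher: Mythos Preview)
Your route differs substantially from the paper's. The paper does \emph{not} run a Beilinson spectral sequence and never attempts to kill the mixed terms $\Omega^{a_{i_1}}\boxtimes\cdots\boxtimes\Omega^{a_{i_\ell}}$ with $\ell\ge 2$ that you flag as the main obstacle. Instead, it argues directly: since $E(-1,\ldots,-1)$ fails to be regular, some single group
\[
H^{n_1+\cdots+a_j+\cdots+n_s}\bigl(E(-1,\ldots,-1)\otimes\OO(-n_1,\ldots,-a_j,\ldots,-n_s)\bigr)
\]
is nonzero. Chasing the Koszul complexes of Proposition~2.2 tensored by $E$ and by $E^\vee$ produces surjections from $H^0\bigl(E\otimes(\OO\boxtimes\Omega^{a_j}(a_j+1))^\vee\bigr)$ and from $H^0\bigl(E^\vee\otimes\OO\boxtimes\Omega^{a_j}(a_j+1)\bigr)$ onto this nonzero space; a Yoneda pairing then yields maps $g_j:\OO\boxtimes\Omega^{a_j}(a_j+1)\to E$ and $f_j:E\to\OO\boxtimes\Omega^{a_j}(a_j+1)$ with $f_j\circ g_j\neq 0$, and simplicity of $\Omega^{a_j}(a_j+1)$ forces $f_j\circ g_j$ to be an automorphism, so the summand splits off. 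This bypasses the multi-factor bookkeeping entirely and is cleaner than your proposed $\Hom(F,E/F)=0$, $\mathrm{Ext}^1(E/F,F)=0$ mechanism (which, incidentally, is not the right pair of conditions: one only needs $f\circ g\neq 0$ and simplicity).

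There is also a genuine gap in your outline. When $r<d$, hypothesis~(1) only gives vanishing for $i\le r-1$, whereas both your spectral sequence and the paper's Koszul chase need vanishing up through $i=d-1$. The paper fills this with le~Potier's vanishing theorem: $E$ is globally generated, so $E(a_1,\ldots,a_s)$ is ample for $a_i>0$, hence $H^i(E^\vee(a_1,\ldots,a_s))=0$ for $0<i<d$; Serre duality then supplies the missing range $r\le i\le d-1$ automatically. You must invoke this explicitly before either argument can proceed.
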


\begin{proof}
We prove (1)$\Longrightarrow$(2).\\
Since $Reg(E)=0$ then $E$ is regular but $E(-1,\ldots,-1)$ is not.\\
Since $E$ is globally generated i.e. a regular coherent sheaf is globally generated and the tensor product of a vector bundle spanned by an ample
vector bundle is ample and so we have
$a_1,\cdots,a_s>0$ $\Longrightarrow$  $E(a_1,\cdots,a_s)$ is ample.\\
Suppose $r<d$ by le Potier's vanishing theorem 
\[H^i(E^{\vee}(a_1,\cdots,a_s))=0\] for every $a_1,\cdots,a_s>0$ and $0<i<d$.\\
So by Serre's duality
\[H^i(E(-n_1-1+a_1,\cdots,-n_s-1+a_s))=0\] for every $a_1,\cdots,a_s>0$ and $0<i<d$.\\
\noindent In particular for any $0<j<d$ 
\[H^i(E(-1,\cdots,-1)\otimes\OO(k_1,\cdots,k_s))=0\] whenever $d\geq-i$, $-n_j\leq k_j\leq0$ for $j=1,\cdots,s$.\\
We say that $E(-1,\cdots,-1)$ is not regular if and only if there exists an integer $a_j\in\{0,\cdots,n_s\}$ such that
\begin{align*}
H^{n_2+n_3\cdots+n_s+a_1}(E(-1,\cdots,-1)\otimes\OO(-a_1,-n_2,\cdots,-n_s))\neq0\\
H^{n_1+n_3+\cdots+n_s+a_2}(E(-1,\cdots,-1)\otimes\OO(-n_1,-a_2,\cdots,-n_s))\neq0\\
\cdots\cdots\cdots\cdots\cdots\cdots\cdots\cdots\cdots\cdots\cdots\cdots\cdots\cdots\cdots\cdots\cdots\cdots\cdots\\
H^{n_1+n_2+\cdots+n_{s-1}+a_s}(E(-1,\cdots,-1)\otimes\OO(-n_1,\cdots,-n_{s-1},-a_s))\neq0
\end{align*}

If $a_j=0$ or $a_j=n_j$ then by a generalized version of Thm 1.4 (Ballico-Malaspina) we have direct summands 
$\OO,\OO(0,\cdots,0,1)$, $\OO(0,\cdots,1,0)$,$\ldots$, $\OO(0,1,\cdots,0,0)$ and $\OO(1,0,\cdots,0,0)$.\\
Now fix $a_j\in\{1,\cdots,n_s-1\}$ and assume\\
$H^{n_1+\cdots+a_j+\cdots+n_s}(E(-1,\cdots,-1)\otimes\OO(-n_1,\cdots,-a_j,\cdots,-n_s))\neq0$ and consider the exact sequence
\[K_1:0 \rightarrow \OO(-n_1-1,\cdots,-n_s-1)\rightarrow \OO(-n_1,\cdots,-n_s-1)^{{n_1+1}\choose{n_1}}\rightarrow\cdots\rightarrow
\OO(0,-n_2-1,\cdots-n_s-1)\rightarrow0\]
twisted by $\OO(0,\cdots,0,-1-a_j)$ that is
\[0 \rightarrow \OO(-n_1-1,\cdots,-n_s-a_j-2)\rightarrow \OO(-n_1,\cdots,-n_s-a_j-2)^{{n_1+1}\choose{n_1}}\rightarrow\]
\[\cdots\rightarrow\OO(0,-n_2-1,\cdots-n_s-a_j-2)\rightarrow0\]
and the dual of 
$0\rightarrow\OO\boxtimes\Omega_{\PP^{n_j}}^{a_j}(a_j+1)\rightarrow\OO(0,\cdots,0,1)^{{n_j+1}\choose{n_j}}\rightarrow$
\[\rightarrow\OO(0,\cdots,0,2)^{{n_j+1}\choose{n_j}}\rightarrow\cdots\rightarrow\OO(0,\cdots,0,a_j)^{{n_j+1}\choose{n_j}}\rightarrow\OO(0,\cdots,0,a_j+1)^{{n_j+1}\choose{n_j}}\rightarrow0\]
Tensoring by $E$ we get
\[0 \rightarrow \OO(-n_1-1,\cdots,-n_s-1,-1-a_j)\otimes{E}\rightarrow\OO(-n_1,-n_2-1,\cdots,-1-a_j)^{{n_1+1}\choose{n_1}}\otimes{E}\rightarrow\cdots\]
\[\rightarrow\OO(-1,\cdots,-1-a_j)^{{n_1+1}\choose{n_1}}\otimes{E}\rightarrow\OO(0,\cdots,-a_j)^{{n_1+1}\choose{n_1}}\otimes{E}\rightarrow\]
\[\rightarrow\cdots\rightarrow\OO\boxtimes\left(\Omega_{\PP^{n_j}}^{a_j}\right)^\vee(-1-a_j)\otimes E\]
Since 

\[H^{n_1+\cdots+a_j+\cdots+n_s\cdots+n_s}(E(-n_1,\cdots,-n_{s-1},-a_j)=\]\\
\[H^{1+\cdots+a_j+\cdots+1\cdots+1}(E(-1,\cdots,-1,-a_j)=\]\\
$\cdots\cdots\cdots\cdots\cdots\cdots\cdots\cdots\cdots\cdots\cdots\cdots\cdots\cdots\cdots\cdots\cdots\cdots\cdots$\\
\[H^{a_j}(E(0,\cdots,0,-a_j)=0\]
we have the surjection map $\psi_1$
\[H^0(\OO\boxtimes\left(\Omega_{\PP^{n_j}}^{a_j}\right)^\vee(-1-a_j)\otimes E)\]
\[
\begin{CD}
@.@V^{surjects}VV\\
@.H^{n_1+\cdots+a_j+\cdots+n_s\cdots+n_s}(E(-n_1,\cdots,-n_{s-1},-a_j)\\
\end{CD}
\]
We also have that 
\[H^{n_1+\cdots+a_j+\cdots+n_s}(E(-n_1-1,\cdots,-n_{s-1},-a_j-1))\cong H^{n_1+\cdots+n_s-a_j}(E^\vee(0,\cdots,-n_s-1+a_j))\]
Consider the exact sequence
\[\OO(0,\cdots,0,-n_j+a_j)\rightarrow\OO(0,\cdots,0,-n_j+a_j+1)^{{n_j+1}\choose{n_j}}\rightarrow\cdots\rightarrow
\OO^{{n_j+1}\choose{a_j}}\rightarrow\OO\boxtimes\left(\Omega_{\PP^{n_j}}^{a_j}\right)(a_j+1)\rightarrow0\]
Tensor it by $E^\vee$ to get
\[E^\vee(0,\cdots,0,-n_j+a_j)\longrightarrow E^\vee(0,\cdots,0,-n_j+a_j+1)^{{n_j+1}\choose{n_j}}\longrightarrow\cdots\longrightarrow\]
\[\longrightarrow E^\vee{^{{n_j+1}\choose{a_j}}}\longrightarrow \OO\boxtimes\left(\Omega_{\PP^{n_j}}^{a_j}\right)(a_j+1)\otimes E^\vee\longrightarrow0\]
Since \\
\[H^{n_1+\cdots+n_s-1}(E(-n_1-1,\cdots,-n_{s-1}))=\]\\
\[\cdots\cdots\cdots\cdots\cdots\cdots\cdots\cdots\cdots\cdots\cdots\cdots\cdots\cdots\cdots\cdots\cdots\cdots\cdots\]\\
\[H^{n_1+\cdots+n_s-n_j+a_j+1}(E(-n_1-1,\cdots,-n_{s-1}-1,-a_j-3))=\]\\
\[H^{n_1+\cdots+n_s-n_j+a_j}(E(-n_1-1,\cdots,-n_{s-1}-1,-a_j-2))=\]\\
by Serre's duality we have $H^1(E^\vee)=\cdots=$
\[=H^{n_1+\cdots+n_{s-1}-a_j-1}(E^\vee(0,-n_1-\cdots-n_{s-1}+a_j+2))=H^{n_1+\cdots+n_{s-1}+a_j}(0,\cdots,-n_1-\cdots-n_{s-1}+a_j)\]
we have the surjection map $\psi_2$
\[H^0(\OO\boxtimes\Omega_{\PP^{n_j}}^{a_j}(1+a_j)\otimes E^\vee)\]
\[
\begin{CD}
@.@V^{surjects}VV\\
@.H^{n_1+\cdots+n_{s-1}+a_j}(E^\vee(-n_1,\cdots,-n_{s-1}+a_j)\\
\end{CD}
\]

\vspace{1cm}

From these surjections we have the surjective maps

\[H^0(\OO\boxtimes\left(\Omega_{\PP^{n_j}}^{a_j}\right)^\vee(-1-a_j)\otimes E)\otimes H^{n_1+\cdots+n_{s-1}-a_j}(E^\vee(0,-n_1,\cdots,-n_{s-1}+a_j) \]
\[
\begin{CD}
@.@V^{surjects}VV\\
@.H^{n_1+\cdots+a_j+\cdots+n_s\cdots+n_s}(E(-n_1,\cdots,-n_{s-1},-a_j)\otimes H^{n_1+\cdots+n_{s-1}-a_j}(E^\vee(0,-n_1,\cdots,-n_{s-1}+a_j) \\
\end{CD}
\]
 and 

 \[H^{n_1+\cdots+a_j+\cdots+n_s\cdots+n_s}(E(-n_1,\cdots,-n_{s-1},-a_j)\otimes H^0(\OO\boxtimes\Omega_{\PP^{n_j}}^{a_j}(1+a_j)\otimes E^\vee)\]
\[
\begin{CD}
@.@V^{surjects}VV\\
@.H^{n_1+\cdots+a_j+\cdots+n_s\cdots+n_s}(E(-n_1,\cdots,-n_{s-1},-a_j)\otimes H^{n_1+\cdots+n_{s-1}+a_j}(E^\vee(-n_1,\cdots,-n_{s-1}+a_j) \\
\end{CD}
\]

\vspace{1cm}

\noindent Using Yoneda paring we have:

\vspace{1cm}

\[H^0(\OO\boxtimes\left(\Omega_{\PP^{n_j}}^{a_j}\right)^\vee(-1-a_j)\otimes E)\otimes H^0(\OO\boxtimes\Omega_{\PP^{n_j}}^{a_j}(1+a_j)\otimes E^\vee)\]
\[\begin{CD}
@.@VVV\\
@.H^{0}(\OO)\cong \CC \\
\end{CD}
\]

\[H^0(\OO\boxtimes\left(\Omega_{\PP^{n_j}}^{a_j}\right)^\vee(-1-a_j)\otimes E)\otimes H^{n_1+\cdots+n_{s-1}-a_j}(E^\vee(0,-n_1,\cdots,-n_{s-1}+a_j) \]
\[\begin{CD}
@.@VVV\\
@.H^{{n_1+\cdots+n_{s-1}-a_j}}(\OO\boxtimes\Omega_{\PP^{n_j}}^{a_j}(1+a_j))\cong \CC \\
\end{CD}
\]

\[\displaystyle{H^{\sum_{i=1}^sn_i+a_j}(\OO\boxtimes\left(\Omega_{\PP^{n_j}}^{a_j}\right)^\vee(-1-a_j)\otimes E(-n_1-1,\cdots,-n_s))\otimes M}\]
\[\begin{CD}
@.@VVV\\
@.H^{{n_1+\cdots+n_{s}}}(\OO(-n_1-1,\cdots,-n_s-1))\cong \CC \\
\end{CD}
\]
where $M= H^{{\sum_{i=1}^{s-1}n_i-a_j}}(\OO\boxtimes\Omega_{\PP^{n_j}}^{a_j}(1+a_j)\otimes E^\vee(0,-n_1,\cdots,-n_{s-1}+a_j) $\\
\\
We can find non-degenerate pairs of maps
\[g_j:\OO\boxtimes\Omega_{\PP^{n_j}}^{a_j}(1+a_j)\lra E\] 

and

\[f_j:E\lra\OO\boxtimes\Omega_{\PP^{n_j}}^{a_j}(1+a_j)\]

such that $f_i\circ g_i\neq0$ and since each $\Omega_{\PP^{n_j}}^{a_j}(1+a_j)$ is simple for $i.j=1,\ldots,n_{s-1}$ then $E$ is direct summand.
Hence $E$ has one of the following vector bundles as direct summands
\begin{enumerate}
    \renewcommand{\theenumi}{\alph{enumi}}
    \item $\OO$
    \item $\OO(0,\cdots,0,1)$, $\OO(0,\cdots,1,0)$,$\ldots$, $\OO(0,1,\cdots,0,0)$ and $\OO(1,0,\cdots,0,0)$
    \item $\OO\boxtimes\Omega_{\PP^{n_i}}^{a_i}(a_i+1)$ where $1\leq a_i\leq n_i-1$ or
    \item $\Omega_{\PP^{n_i}}^{a_i}(a_i+1)\boxtimes\OO$ where $1\leq a_i\leq n_i-1$ 
\end{enumerate}
That gives the proof of (1)$\Longrightarrow$(2).
\\

\noindent Now to prove (1)$\Longrightarrow$(2):
we need to show that for any $a_j\in\{1,2,\cdots,n_j-1\}$,\\
$\OO\boxtimes\Omega_{\PP^{n_j}}^{a_j}(1+a_j)$ satifies the conditions of (1).\\
For that we consider all the non-zero cohomology groups so that conditions  of (1) hold.
\end{proof}

\newpage

\section{Acknowledgment}
\noindent I wish to express my  thanks to my colleagues at the Department of Mathematics at the University of Nairobi for granting 
me leave in order to pursue my research work. Lastly, I am supremely grateful to Melissa, my wife and our 3 kids Amelia, Jerome and Chuksie who are always supportive of my pursuits.\\

\noindent \textbf{Data Availability statement}
My manuscript has no associate data.

\noindent \textbf{Conflict of interest}
On behalf of all authors, the corresponding author states that there is no conflict of interest.

\newpage

\end{document}